\documentclass[12pt]{article}

\usepackage{amsmath,amssymb,amscd,epic}
\usepackage{bbm,amsthm}
\usepackage{mathtools}
\usepackage{leftidx}
\usepackage{hyperref}
\usepackage{mathrsfs}
\usepackage{here}
\usepackage{lscape}
\usepackage{xcolor}
\usepackage[top=30truemm,bottom=30truemm,left=25truemm,right=25truemm]{geometry}

\usepackage{caption}
\usepackage{subcaption}

\usepackage{ytableau}
\ytableausetup{boxsize=1.2em, centertableaux}

\usepackage[all,pdf]{xy}

\theoremstyle{plain}
\newtheorem{thm}{Theorem}[section]
\newtheorem{lem}[thm]{Lemma}

\newtheorem{cor}[thm]{Corollary}

\theoremstyle{definition}

\newtheorem{rem}[thm]{Remark}

\newcommand{\perm}{{\rm perm}}

\newcommand{\Per}{{\rm P}}

\newcommand{\Nu}{{\rm N}}
\newcommand{\ord}{{\rm ord}}

\title{Group Permanents of Abelian $p$-Groups \\ and Young Diagrams}
\author{Naoya Yamaguchi and Yuka Yamaguchi}
\date{}

\begin{document}

\maketitle

\begin{abstract}
We study the number $\Nu(\Per(G_{\lambda}))$ of distinct monomials with nonzero coefficients in the group permanent of an abelian $p$-group $G_\lambda$ associated with a partition $\lambda$ of a positive integer $N$. 
First, we derive an explicit formula for $\Nu(\Per(G_{\lambda}))$ in terms of the partial column sums of the Young diagram of $\lambda$. 
Next, we show that the relative order of the values $\Nu(\Per(G_{\lambda}))$ is determined by a lexicographic comparison of the conjugate Young diagrams. 
Finally, we investigate congruence properties of $\Nu(\Per(G_{\lambda}))$ for abelian $p$-groups and establish a criterion involving Wolstenholme primes. 
\end{abstract}


\section{Introduction}
\label{sec:intro}

The circulant determinant is a classical object; see, for example, Ore~\cite{MR42365}. 
For a finite group $G$, 
let 
\[
M(G) := (x_{g h^{-1}})_{g, h \in G}
\]
denote the group matrix associated with $G$. 
The determinant 
\[
\Theta(G) := \det{(M(G))}
\]
is called the group determinant of $G$. 
When $G$ is the cyclic group ${\rm C}_n$, the group determinant $\Theta(G)$ coincides with the circulant determinant.
The group determinant was introduced by Dedekind, and its irreducible factorization over the complex numbers was later obtained by Frobenius~\cite{Frobenius1968gruppen}. 
For historical background, we refer the reader to \cite{MR1659232, MR1554141, van2013history}. 

In this paper, we consider the permanent of the group matrix $M(G)$. 
The permanent 
\[
\Per(G) := \perm{(M(G))} 
\]
is called the group permanent of $G$. 
For a polynomial $f$, 
let $\Nu(f)$ denote the number of distinct monomials with nonzero coefficients in $f$. 
We investigate $\Nu(\Per(G))$. 
For an abelian group $G$ of order $n$, 
Hall~\cite{MR50579} proved that 
\begin{align}\label{eq:Hall}
\Nu(\Per(G)) = |M(G,n)|, 
\end{align}
where 
\[
M(G,m) := \{ (h_{1}, h_{2}, \ldots, h_{m}) \in G^{m} \mid h_{1} h_{2} \cdots h_{m} = 1 \}. 
\]
This equality is also mentioned in \cite[p.~121]{MR2747803}. 
Thus, in the abelian case, determining $\Nu(\Per(G))$ reduces to determining the cardinality of $M(G,n)$. 

When $G$ is cyclic, Brualdi--Newman~\cite{MR266850} gave the explicit formula
\[
\Nu(\Per({\rm C}_{n})) 
= \frac{1}{n} \sum_{d \mid n} \varphi\left( \frac{n}{d} \right) \binom{2d-1}{d}, 
\]
where $\varphi$ denotes Euler's totient function. 
Moreover, for an abelian group $G$ of order $n$, 
it was shown in \cite{MR4261108, MR3448603} that
\begin{equation}\label{eq:2}
|M(G,m)|
=\frac{1}{m+n} \sum_{d \mid \gcd(m,n)} \varphi_G(d)\binom{\frac{m+n}{d}}{\frac{m}{d}},
\end{equation}
where
\[
\varphi_G(d) := \left| \{ g\in G \mid \ord(g) = d \} \right| 
\]
denotes the number of elements of order $d$ in $G$. 
In particular, by taking $m=n$ and using equations~\eqref{eq:Hall} and~\eqref{eq:2}, 
we obtain 
\begin{equation}\label{eq:3}
\Nu(\Per(G)) = \frac{1}{2n} \sum_{d \mid n} \varphi_G(d) \binom{\frac{2n}{d}}{\frac{n}{d}}.
\end{equation}
Explicit formulas for the number of terms in $\Per({\rm C}_n)^k$ are also known; see \cite[Theorem~1.6]{yamaguchi2025principalspecializationmonomialsymmetric}:
\[
\Nu(\Per({\rm C}_{n})^{k})
=\frac{1}{n}\sum_{d\mid n}\varphi\left(\frac{n}{d}\right) \binom{dk+d-1}{d-1}.
\]

For cyclic $p$-groups, several results concerning the numbers of terms in the group determinant and the group permanent are known.
Thomas~\cite{MR2104820} proved that
\[
\Nu(\Theta({\rm C}_{p^N}))
=
\Nu(\Per({\rm C}_{p^N})) 
\]
for $N\ge 1$. 
Thus, congruence properties of $\Nu(\Theta({\rm C}_{p^N}))$ immediately yield corresponding results for $\Nu(\Per({\rm C}_{p^N}))$. 
For an odd prime $p$, it was shown in \cite{MR4593070} that
\[
\Nu(\Theta({\rm C}_p))\equiv 1 \pmod{p}. 
\]
Furthermore, \cite{MR4816571} proved that
\[
\Nu(\Theta({\rm C}_p))\equiv 1 \pmod{p^2}
\]
for every prime $p\ge 5$, and that
\[
\Nu(\Theta({\rm C}_p))\equiv 1 \pmod{p^3} 
\iff
p \text{ is a Wolstenholme prime}. 
\]
Recall that a prime $p$ is called a Wolstenholme prime if 
\[
\binom{2p-1}{p-1} \equiv 1 \pmod{p^{4}}.
\]
There are several equivalent formulations of this condition, usually involving harmonic sums~\cite[p.~385]{MR1339137}. 
More recently, 
\cite[Corollary~1.7]{yamaguchi2025principalspecializationmonomialsymmetric} established congruence properties for the numbers of terms in powers of the group permanent. Specifically, for a prime $p\ge 5$ and integers $N,k\ge 1$,
\[
\Nu(\Per({\rm C}_{p^N})^k) \equiv 1 \pmod{p^{2}}.  
\]
It was also shown that, 
for every $N\geq 1$, Wolstenholme primes give a sufficient condition for the congruence
\[
\Nu(\Per({\rm C}_{p^N})) \equiv 1 \pmod{p^{3}}
\]
to hold. 
We strengthen this result by proving the converse.
More precisely, for every $N\geq 1$, we show that
\[
\Nu(\Per({\rm C}_{p^N})) \equiv 1 \pmod{p^{3}}
\]
if and only if $p$ is a Wolstenholme prime.

Recently, Li--Zhang~\cite{MR4826770} proved that, 
for finite abelian groups $G$ and $H$,
\[
\Nu(\Per(G)) = \Nu(\Per(H)) \iff G \cong H,
\]
showing that the number of terms in the group permanent determines the isomorphism class of a finite abelian group. 
They also proved that if $G$ is a noncyclic abelian group of order $n$, then
\[
\Nu(\Per(G)) > \Nu(\Per({\rm C}_n)),
\]
thereby answering a question of Panyushev~\cite{MR2747803}. 
Thus, the number $\Nu(\Per(G))$ reflects the structure of the underlying finite abelian group. 
A natural question is how the structure of an abelian $p$-group is reflected in the relative order of the values $\Nu(\Per(G))$. 
This relationship, however, has not yet been fully understood. 

We now recall the basic partition-theoretic notation used throughout the paper. 
For a positive integer $N$, 
we write $\lambda \vdash N$ if $\lambda$ is a partition of $N$, 
that is, 
\[
\lambda = (\lambda_1, \lambda_2, \ldots, \lambda_r),
\qquad
\lambda_1 \geq \lambda_2 \geq \cdots \geq \lambda_r \geq 1,
\qquad
N = \sum_{i = 1}^r \lambda_i.
\]
For a partition $\lambda$, 
let $\lambda' = (\lambda'_1, \lambda'_2, \ldots )$ denote its conjugate partition, 
obtained by transposing the corresponding Young diagram, 
where $\lambda'_{t} := 0$ for $t > \lambda_{1}$. 
For partitions $\lambda$ and $\mu$ of the same integer, we use the lexicographic order: we write $\lambda<\mu$ if there exists $m$ such that $\lambda_i=\mu_i$ for all $i<m$ and $\lambda_m<\mu_m$.

To a partition $\lambda=(k_{1},k_{2},\ldots,k_{r})$, 
we associate the abelian $p$-group
\[
G_{\lambda}
:=
{\rm C}_{p^{k_{1}}}\times {\rm C}_{p^{k_{2}}}\times \cdots \times {\rm C}_{p^{k_{r}}}.
\]
By the structure theorem for finite abelian groups, every abelian $p$-group is represented uniquely in this form. 
For a partition $\lambda = (k_{1}, k_{2}, \ldots, k_{r}) \vdash N$, we define
\[
s_t(\lambda):=\sum_{i=1}^{r}\min(k_i,t), \qquad t = 1, 2, \ldots, N, 
\]
and $s_0(\lambda):= 0$. 
This is the number of boxes in the first $t$ columns of the Young diagram of $\lambda$. 
Equivalently,
\[
s_t(\lambda)=\lambda'_1+\lambda'_2+\cdots+\lambda'_t.
\]

Our main results are as follows.

\begin{thm}\label{thm:1}
Let $\lambda$ be a partition of a positive integer $N$, and let $G_{\lambda}$ be the corresponding abelian $p$-group. Then
\[
\Nu(\Per(G_{\lambda}))
=
1+\frac{1}{2p^{N}}\sum_{t=0}^{N-1}p^{s_{t}(\lambda)}
(\beta_{t}-\beta_{t+1}),
\qquad
\beta_{t}:=\binom{2p^{N-t}}{p^{N-t}}.
\]
\end{thm}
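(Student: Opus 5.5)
Starting from equation~\eqref{eq:3} with $n=p^{N}$, the divisors of $n$ are $d=p^{j}$ for $j=0,1,\ldots,N$, so
\[
\Nu(\Per(G_\lambda))=\frac{1}{2p^{N}}\sum_{j=0}^{N}\varphi_{G_\lambda}(p^{j})\binom{2p^{N-j}}{p^{N-j}}
=\frac{1}{2p^{N}}\sum_{j=0}^{N}\varphi_{G_\lambda}(p^{j})\,\beta_{j}.
\]
The first step is to compute $\varphi_{G_\lambda}(p^{j})$ in terms of the column sums $s_t(\lambda)$. The elements of $G_\lambda$ whose order divides $p^{t}$ form the subgroup $G_\lambda[p^{t}]$. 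In each factor ${\rm C}_{p^{k_i}}$ this subgroup has $p^{\min(k_i,t)}$ elements, so $|G_\lambda[p^{t}]|=p^{s_t(\lambda)}$, including $t=0$ where the count is $1$. Consequently $\varphi_{G_\lambda}(1)=1$ and $\varphi_{G_\lambda}(p^{j})=p^{s_j(\lambda)}-p^{s_{j-1}(\lambda)}$ for $j\ge1$.

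The second step substitutes these values and applies Abel summation:
\[
\sum_{j=0}^{N}\varphi_{G_\lambda}(p^{j})\beta_j=\beta_0+\sum_{j=1}^{N}\bigl(p^{s_j}-p^{s_{j-1}}\bigr)\beta_j
=\sum_{t=0}^{N-1}p^{s_t}(\beta_t-\beta_{t+1})+p^{s_N}\beta_N .
\]
The boundary term is handled by two observations. First, $s_N(\lambda)=N$, because every part satisfies $k_i\le N$. Second, $\beta_N=\binom{2}{1}=2$. Together these give $p^{s_N}\beta_N=2p^{N}$, and after dividing by $2p^{N}$ this term contributes exactly the leading $1$ in the statement. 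Dividing the remaining terms by $2p^{N}$ produces the displayed formula.

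No step is a real obstacle. The only points needing care are the telescoping indices, in particular that the $j=0$ term $\beta_0$ is absorbed correctly into the $t=0$ term of the sum, and the identification $s_N(\lambda)=N$.
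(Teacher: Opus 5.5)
Your proposal is correct and follows the paper's proof essentially step for step. The paper likewise computes $\bigl|\{g\in G_\lambda \mid g^{p^t}=e\}\bigr| = p^{s_t(\lambda)}$ (Lemma~\ref{lem:5}), writes $\varphi_{G_\lambda}(p^t)$ as a difference of consecutive such counts (Lemma~\ref{lem:6}), and applies summation by parts to equation~\eqref{eq:3}; your explicit handling of the boundary term via $s_N(\lambda)=N$ and $\beta_N=2$ spells out a step the paper leaves implicit.
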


Theorem~\ref{thm:1} gives an explicit description of $\Nu(\Per(G_{\lambda}))$ in terms of the partial column sums of the Young diagram of $\lambda$. 
It forms the basis for the order comparison and congruence results obtained later. 

\begin{thm}\label{thm:4}
Let $\lambda$ and $\mu$ be partitions of a positive integer $N$. 
Assume that there exists $m\in\{1, 2, \ldots, N-1\}$ such that
\[
s_t(\lambda)=s_t(\mu)\quad (t=1,2,\ldots,m-1),
\qquad
s_m(\lambda)<s_m(\mu).
\]
Then
\[
\Nu(\Per(G_{\lambda})) < \Nu(\Per(G_{\mu})).
\]
\end{thm}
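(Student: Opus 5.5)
The plan is to compare the two explicit expressions from Theorem~\ref{thm:1} directly. Subtracting them gives
\[
\Nu(\Per(G_{\mu}))-\Nu(\Per(G_{\lambda}))
=\frac{1}{2p^{N}}\sum_{t=0}^{N-1}\bigl(p^{s_t(\mu)}-p^{s_t(\lambda)}\bigr)(\beta_t-\beta_{t+1}).
\]
The terms with $t<m$ vanish, by hypothesis and because $s_0=0$. Since central binomial coefficients are strictly increasing in the size, $\beta_t-\beta_{t+1}>0$ for every $t$. Hence the term $t=m$ is strictly positive. The terms $t>m$ may be negative, and they must be dominated by the $t=m$ term.

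First, I will lower-bound the main term. Since $s_m(\mu)\ge s_m(\lambda)+1$, it is at least $p^{s_m(\lambda)}(p-1)(\beta_m-\beta_{m+1})$. I also want a lower bound on $s_m(\lambda)$. If $m>\lambda_1$, then $s_m(\lambda)=N\ge s_m(\mu)$, contradicting the hypothesis. So $m\le\lambda_1$, which means $\lambda'_1,\dots,\lambda'_m\ge1$ and $s_m(\lambda)\ge m$. Second, I will bound the tail crudely by discarding the $\mu$-contributions and using $p^{s_t(\lambda)}\le p^N$ and telescoping:
\[
\sum_{t=m+1}^{N-1}p^{s_t(\lambda)}(\beta_t-\beta_{t+1})\le p^{N}(\beta_{m+1}-\beta_N)<p^N\beta_{m+1}.
\]

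It therefore suffices to show that $p^{s_m(\lambda)}(p-1)(\beta_m-\beta_{m+1})\ge p^N\beta_{m+1}$. Using $s_m(\lambda)\ge m$ and writing $M=p^{N-m}\ge p$ and $K=M/p$, this reduces to
\[
\binom{2M}{M}\ \ge\ \Bigl(1+\frac{M}{p-1}\Bigr)\binom{2K}{K}.
\]
This elementary binomial inequality is the main obstacle. I would first try generic estimates: $\binom{2M}{M}\ge 4^M/(2\sqrt{M})$ and $\binom{2K}{K}\le 4^K$ with $K\le M/2$. These reduce the claim to something like $4^{M/2}\ge 2\sqrt{M}(M+1)$, which holds for all $M$ beyond a small threshold (around $M\ge 8$).

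The finitely many small cases $M\in\{2,3,4,5,7\}$, i.e.\ $(p,N-m)$ small, would be checked by hand. For example, $M=p=2$, $K=1$ requires $6\ge 3\cdot 2$, which holds. If the crude tail bound proves too lossy there, I would keep the exact tail, using $s_t(\lambda)\le N-(N-t)=t$ fails in general, so instead I would use $p^{s_t(\lambda)}\le p^{N}$ only for $t\ge m+1$ and the exact value $\beta_N=2$. Alternatively, for $K\ge1$ I would use the ratio formula
\[
\frac{\binom{2M}{M}}{\binom{2K}{K}}=\prod_{j=K+1}^{M}\frac{2(2j-1)}{j},
\]
in which each factor is at least $3$. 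This gives at least $3^{M-K}$, and I would verify $3^{M-M/p}\ge 1+M/(p-1)$ directly for all $M\ge p$.
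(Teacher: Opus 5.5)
Your proof is correct. It runs parallel to the paper's, but on the other side of a summation by parts.

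The paper works with the unsummed form $\sum_t\gamma_t\beta_t$, where $\gamma_t=\varphi_{G_\mu}(p^t)-\varphi_{G_\lambda}(p^t)$. It gets $\gamma_t=0$ for $t<m$ and $\gamma_m\ge p^m$. It then bounds the total negative mass $\sum_{t\in I^-}(-\gamma_t)$ by $|G_\lambda|=p^N$, using $\beta_t\le\beta_{m+1}$ for $t>m$. It finishes by quoting the Li--Zhang inequality $p^m\beta_m>p^N\beta_{m+1}$ (Lemma~\ref{lem:8}).

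You instead stay with the formula of Theorem~\ref{thm:1}, use positivity of the weights $\beta_t-\beta_{t+1}$, and telescope the tail to $p^N(\beta_{m+1}-\beta_N)$. This leads to the comparable requirement $\binom{2M}{M}\ge\bigl(1+\frac{M}{p-1}\bigr)\binom{2K}{K}$.

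What your route buys is self-containedness. The product bound $\prod_{j=K+1}^{M}(4-2/j)\ge 3^{M-K}$ holds with each factor at least $3$, because $K\ge 1$ (as $m\le N-1$). Together with the routine check $3^{M(p-1)/p}\ge 1+M/(p-1)$ for $M\ge p$, it replaces the external citation. You also justify $s_m(\lambda)\ge m$, which the paper uses without comment.

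The case $p=2$, $M=2$ gives equality in your binomial inequality. So the strict conclusion genuinely relies on the $-\beta_N$ in the telescoped tail, or on the tail being empty when $m=N-1$; your setup does provide this.

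In a final write-up, drop the asymptotic $4^M/(2\sqrt{M})$ detour and the garbled sentence about $s_t(\lambda)\le t$, and go straight to the product bound.
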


Since $s_t(\lambda)=\lambda'_1+\lambda'_2+\cdots+\lambda'_t$, 
the assumptions in Theorem~\ref{thm:4} mean precisely that
\[
\lambda' < \mu'
\]
in the lexicographic order on the conjugate partitions. 
Thus, Theorem~\ref{thm:4} shows that the relative order of the values $\Nu(\Per(G_{\lambda}))$ is governed by the lexicographic order on the conjugate Young diagrams.

In contrast to the known congruences for the number of terms in the group determinants of cyclic groups, the following two theorems provide congruences for the number of terms in the group permanents of noncyclic abelian $p$-groups.

\begin{thm}\label{thm:5}
Let $p \geq 5$ be a prime. 
Assume that $G_{\lambda}$ is a noncyclic abelian $p$-group. 
Then
\[
\Nu(\Per(G_{\lambda})) \equiv 1 \pmod{p^{3}}. 
\]
\end{thm}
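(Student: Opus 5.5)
The plan is to derive Theorem~\ref{thm:5} directly from Theorem~\ref{thm:1}, combined with the classical Wolstenholme--Jacobsthal congruence for central binomial coefficients. Subtracting $1$ in Theorem~\ref{thm:1} gives
\[
\Nu(\Per(G_{\lambda}))-1=\sum_{t=0}^{N-1}\frac{p^{s_t(\lambda)-N}}{2}\,(\beta_t-\beta_{t+1}).
\]
The left side is an integer. So it suffices to show that each summand is a rational number whose $p$-adic valuation is at least $3$. Then the sum lies in $p^3\mathbb{Z}_{(p)}\cap\mathbb{Z}=p^3\mathbb{Z}$. The factor $\frac12$ does not affect $p$-adic valuations because $p$ is odd, so the whole argument is about $v_p$ of each term.

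The key input is the congruence
\[
\binom{2p^{a}}{p^{a}}\equiv\binom{2p^{a-1}}{p^{a-1}}\pmod{p^{3a}},\qquad a\ge 1,\ p\ge 5.
\]
The case $a=1$, namely $\binom{2p}{p}\equiv 2\pmod{p^3}$, is Wolstenholme's theorem; the general case is Jacobsthal's refinement. With $a=N-t$ this gives $v_p(\beta_t-\beta_{t+1})\ge 3(N-t)$ for $0\le t\le N-1$. I will cite this congruence rather than reprove it. This is the main external input, and the only real obstacle is recalling it in the right form; in particular, the exponent $3a$ rather than just $3$ is needed for the $t=0$ term.

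Next I need a combinatorial lower bound on $s_t(\lambda)$ from noncyclicity. Write $\lambda=(k_1,\dots,k_r)$. Noncyclic means $r\ge 2$, hence $k_1\le N-1$ and $N\ge 2$. I claim $s_t(\lambda)\ge t+1$ for $1\le t\le N-1$. If $t\le k_1$, then $s_t(\lambda)\ge \min(k_1,t)+\min(k_2,t)\ge t+1$. If $t>k_1$, then every $k_i\le t$, so $s_t(\lambda)=N\ge t+1$.

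The valuations then follow term by term:
\begin{itemize}
\item For $1\le t\le N-1$, the summand has valuation at least $(t+1)-N+3(N-t)=2(N-t)+1\ge 3$.
\item For $t=0$, since $s_0(\lambda)=0$, the summand has valuation at least $-N+3N=2N\ge 4$, using $N\ge 2$.
\end{itemize}
Summing these gives $\Nu(\Per(G_{\lambda}))\equiv 1\pmod{p^3}$.

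It is worth noting where noncyclicity enters. In the cyclic case the bound $s_t=t$ yields only valuation $2(N-t)$, which equals $2$ at $t=N-1$. That is consistent with the Wolstenholme-prime criterion for ${\rm C}_{p^N}$.
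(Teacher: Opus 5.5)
Your proposal is correct and follows essentially the same route as the paper. Both arguments use Theorem~\ref{thm:1}, the Jacobsthal-type bound $v_p(\beta_t-\beta_{t+1})\ge 3(N-t)$ (the paper's Lemma~\ref{lem:12} and Corollary~\ref{cor:vp}), the bound $s_t(\lambda)\ge t+1$ coming from noncyclicity, and the same term-by-term valuation estimates: at least $2N\ge 4$ for $t=0$ and at least $2(N-t)+1\ge 3$ for $1\le t\le N-1$.
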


\begin{rem}
When $p = 3$ and $\lambda = (1,1)$, by Theorem~\ref{thm:1}, we have
\[
\Nu(\Per(G_{\lambda})) = 2710 \equiv 10 \not\equiv 1 \pmod{3^{3}}.
\]
Thus the hypothesis $p\geq 5$ cannot be omitted.
\end{rem}

\begin{thm}\label{thm:6}
Let $p \geq 5$ be a prime. 
Assume that $G_{\lambda}$ is a noncyclic abelian $p$-group. 
Then
\[
\Nu(\Per(G_{\lambda})) \equiv \binom{2p-1}{p-1} \pmod{p^{4}}.
\]
\end{thm}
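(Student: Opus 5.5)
The plan is to start from the explicit formula of Theorem~\ref{thm:1}, show that every summand except a telescoping block is divisible by $p^{4}$, and then reduce the surviving block to $\binom{2p-1}{p-1}$ using Jacobsthal's congruence. Write $\lambda=(k_1,\ldots,k_r)$. Noncyclic means $r=\lambda'_1\ge 2$, and hence $k_1=\lambda_1\le N-1$. Recall $\beta_N=\binom{2}{1}=2$.

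The only external input is Jacobsthal's strengthening of Wolstenholme's theorem: for $p\ge5$ and $k\ge1$,
\[
\binom{2p^{k}}{p^{k}}\equiv\binom{2p^{k-1}}{p^{k-1}}\pmod{p^{3k}}.
\]
In our notation this says $v_p(\beta_t-\beta_{t+1})\ge 3(N-t)$ for $0\le t\le N-1$. Since $p$ is odd, the factor $\tfrac12$ never affects $p$-adic valuations.

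\textbf{Step 1: split the sum.} Break $\sum_{t=0}^{N-1}$ into three ranges: $t=0$, then $1\le t<\lambda_1$, then $\lambda_1\le t\le N-1$.
\begin{itemize}
\item For $t\ge\lambda_1$ we have $s_t(\lambda)=N$. The third range therefore telescopes to
\[
\tfrac12\sum_{t=\lambda_1}^{N-1}(\beta_t-\beta_{t+1})=\tfrac12(\beta_{\lambda_1}-2).
\]
\item The $t=0$ term is $p^{-N}(\beta_0-\beta_1)/2$. Its valuation is at least $3N-N=2N\ge4$, because $N\ge r\ge2$.
\item For $1\le t<\lambda_1$ we have $s_t(\lambda)=\sum_i\min(k_i,t)\ge t+(r-1)\ge t+1$. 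Also $N-t\ge N-\lambda_1+1\ge2$. The valuation of $p^{s_t-N}(\beta_t-\beta_{t+1})/2$ is then at least $(t+1-N)+3(N-t)=2(N-t)+1\ge5$.
\end{itemize}
Consequently
\[
\Nu(\Per(G_\lambda))\equiv 1+\tfrac12(\beta_{\lambda_1}-2)=\tfrac12\beta_{\lambda_1}=\binom{2p^{m}-1}{p^{m}-1}\pmod{p^4},\qquad m:=N-\lambda_1\ge1.
\]

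\textbf{Step 2: reduce to $m=1$.} Apply Jacobsthal's congruence again, now along the chain $\beta$-values with exponents $m,m-1,\ldots,1$. For $k\ge2$ it gives
\[
\tfrac12\binom{2p^{k}}{p^{k}}\equiv\tfrac12\binom{2p^{k-1}}{p^{k-1}}\pmod{p^{6}}.
\]
Chaining these down from $k=m$ to $k=2$ yields $\binom{2p^{m}-1}{p^{m}-1}\equiv\binom{2p-1}{p-1}\pmod{p^4}$, which completes the argument.

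\textbf{Main obstacle.} The delicate point is the bookkeeping in the middle range $1\le t<\lambda_1$. There the bound $s_t\ge t+1$ uses noncyclicity in an essential way: for a cyclic group $s_t=t$, and the $t=0$ and $t=N-1$ terms fail to be divisible by $p^4$. One must also correctly invoke Jacobsthal's congruence with modulus $p^{3k}$ rather than merely $p^{3}$; only the stronger version guarantees valuation at least $4$ uniformly in all ranges. If only the weaker Wolstenholme-type bound were available, I would instead prove $v_p(\beta_t-\beta_{t+1})\ge 3(N-t)$ directly, via Kummer-style $p$-adic expansions of $\prod_{j}(p^{k}+j)/j$.
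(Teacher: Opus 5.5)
Your proof is correct and is essentially the paper's argument: both start from Theorem~\ref{thm:1} and discard terms using $v_p(\beta_t-\beta_{t+1})\ge 3(N-t)$. The only difference is that the paper uses $s_t(\lambda)\ge t+1$ for all $1\le t\le N-1$ and isolates just the $t=N-1$ term (where $s_{N-1}(\lambda)=N$), so your telescoping of the block $\lambda_1\le t\le N-1$ followed by collapsing $\beta_{\lambda_1}$ back down to $\beta_{N-1}$ in Step~2 is an equivalent detour; separately, your ``main obstacle'' remark is slightly off (without affecting the proof), since for a cyclic group with $N\ge 2$ the $t=0$ term still has valuation at least $2N\ge 4$ and only the $t=N-1$ term is not automatically divisible by $p^4$.
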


As an immediate consequence, 
we obtain the following criterion in terms of Wolstenholme primes.

\begin{cor}\label{cor:}
Let $p \geq 5$ be a prime. 
Assume that $G_{\lambda}$ is a noncyclic abelian $p$-group. 
Then
\[
\Nu(\Per(G_{\lambda})) \equiv 1 \pmod{p^{4}} 
\iff
p \text{ is a Wolstenholme prime}.
\]
\end{cor}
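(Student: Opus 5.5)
The corollary follows directly from Theorem~\ref{thm:6}, with the definition of a Wolstenholme prime doing the rest. Fix a prime $p\ge 5$ and a noncyclic abelian $p$-group $G_\lambda$. By Theorem~\ref{thm:6},
\[
\Nu(\Per(G_{\lambda})) \equiv \binom{2p-1}{p-1} \pmod{p^{4}}.
\]
Congruence modulo $p^4$ is an equivalence relation, so $\Nu(\Per(G_\lambda)) \equiv 1 \pmod{p^4}$ holds if and only if $\binom{2p-1}{p-1}\equiv 1 \pmod{p^4}$. By the definition recalled in the introduction, the latter condition says exactly that $p$ is a Wolstenholme prime.

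Both directions of the equivalence come from this single substitution. If $p$ is Wolstenholme, then $\Nu(\Per(G_\lambda))\equiv\binom{2p-1}{p-1}\equiv 1 \pmod{p^4}$. Conversely, if $\Nu(\Per(G_\lambda))\equiv 1 \pmod{p^4}$, then $\binom{2p-1}{p-1}\equiv \Nu(\Per(G_\lambda))\equiv 1 \pmod{p^4}$, so $p$ is Wolstenholme.

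There is no real obstacle in this step. All the arithmetic content sits in Theorem~\ref{thm:6}, which I take as given. The only point to check is that the hypotheses match: $p\ge 5$ and $G_\lambda$ noncyclic, which is equivalent to $\lambda$ having at least two parts. These are exactly the hypotheses of Theorem~\ref{thm:6}.

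It is worth noting that the definition uses $\binom{2p-1}{p-1}$, the same binomial coefficient that appears in Theorem~\ref{thm:6}. So no conversion to other forms, such as $\binom{2p}{p}\equiv 2$, is needed.
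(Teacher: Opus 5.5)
Your proposal is correct and matches the paper, which states this corollary as an immediate consequence of Theorem~\ref{thm:6}. As you do, it combines $\Nu(\Per(G_{\lambda})) \equiv \binom{2p-1}{p-1} \pmod{p^{4}}$ with the definition of a Wolstenholme prime as $\binom{2p-1}{p-1}\equiv 1 \pmod{p^4}$.
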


\begin{thm}\label{thm:7}
Let $p \geq 5$ be a prime and let $N\geq 1$. Then
\[
\Nu(\Per({\rm C}_{p^{N}})) - 1 \equiv \frac{1}{p} \left\{ \binom{2p-1}{p-1} - 1 \right\}  \pmod{p^{4}}. 
\]
Here the right-hand side is an integer by Wolstenholme's theorem. 
\end{thm}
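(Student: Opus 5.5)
The plan is to specialize Theorem~\ref{thm:1} to the cyclic case $\lambda=(N)$ and then show that only one term in the resulting sum survives modulo $p^{4}$. For $\lambda=(N)$ we have $s_t(\lambda)=\min(N,t)=t$ for $0\le t\le N-1$. Write $B_k:=\binom{2p^{k}}{p^{k}}$, so that $\beta_t=B_{N-t}$ and $\beta_N=B_0=2$. Theorem~\ref{thm:1} then gives
\[
\Nu(\Per({\rm C}_{p^{N}}))-1=\frac{1}{2}\sum_{t=0}^{N-1}p^{t-N}\bigl(B_{N-t}-B_{N-t-1}\bigr)=\frac12\sum_{j=1}^{N}\frac{B_j-B_{j-1}}{p^{j}},
\]
after the substitution $j=N-t$. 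Since $p$ is odd, the factor $\tfrac12$ is a $p$-adic unit and can be ignored when computing valuations.

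The key input is the Jacobsthal--Kazandzidis congruence. For $p\ge5$ it states
\[
\binom{np}{mp}\equiv\binom{n}{m}\pmod{p^{3+v_p(nm(n-m))}}.
\]
I will apply it with $n=2p^{j-1}$ and $m=p^{j-1}$. Then $v_p(nm(n-m))=3(j-1)$, so
\[
B_j\equiv B_{j-1}\pmod{p^{3j}}.
\]
Hence the $j$-th summand $(B_j-B_{j-1})/p^{j}$ is a $p$-integral rational number with valuation at least $2j$. For every $j\ge2$ this is at least $4$, so all those terms vanish modulo $p^{4}$. For $j=1$ the congruence $B_1\equiv 2\pmod{p^3}$ is just Wolstenholme's theorem, which gives integrality of the surviving term.

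It then remains to evaluate the $j=1$ term. Using $B_1=\binom{2p}{p}=2\binom{2p-1}{p-1}$ and $B_0=2$,
\[
\frac{1}{2}\cdot\frac{B_1-2}{p}=\frac1p\left\{\binom{2p-1}{p-1}-1\right\},
\]
which is exactly the right-hand side of the statement. Since each summand is a genuine integer and $\tfrac12$ is invertible modulo $p^4$, the congruence holds in $\mathbb{Z}$.

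The only real obstacle is securing the Jacobsthal congruence with the sharp exponent $3j$. If citing it is not acceptable, I would prove $B_j\equiv B_{j-1}\pmod{p^{3j}}$ directly. I would write $B_j/B_{j-1}$ as a product over indices not divisible by $p$, then pair the terms $(p^{j}+i)/i$ with $i$ and $-i$ (equivalently, with $i$ and $p^{j}-i$). This reduces the claim to the vanishing of the power sums $\sum i^{-1}$ and $\sum i^{-2}$ over units modulo $p^{j}$ to sufficient order, which is where $p\ge5$ is used.
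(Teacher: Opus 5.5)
Your proposal is correct and is essentially the paper's proof: you specialize Theorem~\ref{thm:1} to $s_t(\lambda)=t$, use $\binom{2p^{m}}{p^{m}}\equiv\binom{2p^{m-1}}{p^{m-1}} \pmod{p^{3m}}$ to show that every term except $t=N-1$ (your $j=1$) has valuation at least $2(N-t)\ge 4$, and evaluate the surviving term as $\frac{1}{p}\bigl\{\binom{2p-1}{p-1}-1\bigr\}$. The only difference is cosmetic: the paper cites this congruence from Me\v{s}trovi\'c (Lemma~\ref{lem:12}, via Corollary~\ref{cor:vp}), whereas you obtain it as the case $n=2p^{j-1}$, $m=p^{j-1}$ of Jacobsthal--Kazandzidis.
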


Theorem~\ref{thm:7} immediately gives the following corollary. 

\begin{cor}\label{cor:8}
Let $p \geq 5$ be a prime and let $N\geq 1$. Then 
\[
\Nu(\Per({\rm C}_{p^{N}})) - 1 \equiv \frac{1}{p} \left\{ \binom{2p-1}{p-1} - 1 \right\} \pmod{p^{3}}. 
\]
Consequently, 
\[
\Nu(\Per({\rm C}_{p^{N}})) \equiv 1 \pmod{p^{3}}
\iff
p \text{ is a Wolstenholme prime}.
\]
\end{cor}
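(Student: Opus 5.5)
The plan is to derive the corollary directly from Theorem~\ref{thm:7}; no further input from Theorem~\ref{thm:1} or from properties of binomial coefficients is needed beyond what Theorem~\ref{thm:7} already uses. Write
\[
B_p := \binom{2p-1}{p-1}, \qquad R_p := \frac{1}{p}\left(B_p - 1\right).
\]
By Wolstenholme's theorem, $B_p \equiv 1 \pmod{p^{3}}$ for $p \geq 5$, so $R_p$ is an integer. This is the remark made after Theorem~\ref{thm:7}.

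\emph{The first congruence.} Theorem~\ref{thm:7} gives $\Nu(\Per({\rm C}_{p^N})) - 1 \equiv R_p \pmod{p^4}$. Both sides are integers and $p^3 \mid p^4$, so the same congruence holds modulo $p^{3}$.

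\emph{The equivalence.} By the first congruence,
\[
\Nu(\Per({\rm C}_{p^{N}})) \equiv 1 \pmod{p^{3}}
\iff
R_p \equiv 0 \pmod{p^3}.
\]
Since $B_p - 1 = pR_p$, we have $p^3 \mid R_p$ if and only if $p^4 \mid pR_p = B_p - 1$. The latter says $B_p \equiv 1 \pmod{p^4}$, which is the definition of a Wolstenholme prime recalled in the introduction. This holds for every $N \geq 1$, because $R_p$ does not depend on $N$.

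There is no real obstacle once Theorem~\ref{thm:7} is available. The only point needing care is to divide by $p$ only after knowing $B_p - 1$ is divisible by $p$, so that all the congruences are between integers.
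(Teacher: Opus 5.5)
Your proposal is correct and matches the paper's approach. The paper simply states that Theorem~\ref{thm:7} immediately gives the corollary. Your argument spells out that step: reduce the modulus from $p^4$ to $p^3$, then observe that $p^3 \mid R_p$ if and only if $p^4 \mid B_p - 1$, which is the definition of a Wolstenholme prime.
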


The paper is organized as follows. 
In Section~\ref{sec:main}, 
we derive the explicit formula in Theorem~\ref{thm:1}. 
In Section~\ref{sec:order}, we establish the order comparison theorem, namely Theorem~\ref{thm:4}. 
In Section~\ref{sec:wolstenholme}, 
we investigate congruence properties of $\Nu(\Per(G_{\lambda}))$ and prove Theorems~\ref{thm:5}, \ref{thm:6}, and~\ref{thm:7}.

\section{An explicit formula for $\Nu(\Per(G_{\lambda}))$}
\label{sec:main}

In this section, we prove Theorem~\ref{thm:1}. 
The key point is to express the number of elements of each possible order in $G_{\lambda}$ in terms of the partial column sums $s_t(\lambda)$ of the Young diagram of $\lambda$. 
We first record two elementary lemmas on abelian $p$-groups.

For any abelian $p$-group $G$, we define
\[
\alpha_t(G) := \bigl| \{g\in G \mid g^{p^t}=e\} \bigr|, 
\qquad t\ge 0, 
\]
where $e$ denotes the identity element of $G$. 
We also set $\alpha_{-1}(G):=0$. 
Let $G_{\lambda}$ be the abelian $p$-group of order $p^{N}$ corresponding to a partition $\lambda$ of $N$. 

\begin{lem}\label{lem:5}
For every $t=0,1,\ldots,N$, we have
\[
\alpha_t(G_\lambda)=p^{s_t(\lambda)}.
\]
\end{lem}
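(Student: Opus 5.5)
The plan is to reduce to the cyclic case using the direct product decomposition, and then sum exponents. Write $G_\lambda = {\rm C}_{p^{k_1}}\times\cdots\times{\rm C}_{p^{k_r}}$. An element $g=(g_1,\ldots,g_r)$ satisfies $g^{p^t}=e$ if and only if $g_i^{p^t}=e$ in ${\rm C}_{p^{k_i}}$ for every $i$. Hence
\[
\alpha_t(G_\lambda)=\prod_{i=1}^{r}\alpha_t({\rm C}_{p^{k_i}}).
\]
It therefore suffices to compute $\alpha_t$ for a single cyclic $p$-group.

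For the cyclic group ${\rm C}_{p^k}$, the solutions of $x^{p^t}=e$ form the unique subgroup of order $\gcd(p^t,p^k)=p^{\min(k,t)}$. Concretely, in $\mathbb{Z}/p^k\mathbb{Z}$, the equation $p^t x\equiv 0$ has exactly $\gcd(p^t,p^k)$ solutions. This gives
\[
\alpha_t({\rm C}_{p^{k_i}})=p^{\min(k_i,t)}.
\]
Multiplying over $i$ yields
\[
\alpha_t(G_\lambda)=p^{\sum_{i}\min(k_i,t)}=p^{s_t(\lambda)},
\]
using the definition of $s_t(\lambda)$.

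The case $t=0$ needs a separate check, because $s_0(\lambda)=0$ is set by convention rather than by the formula. Here $\alpha_0(G_\lambda)=1$, since only $e$ satisfies $g^{1}=e$. This agrees with $p^0$, and also with $\sum_i\min(k_i,0)=0$.

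None of these steps is a real obstacle. The only point needing care is the count of solutions of $p^t x\equiv 0 \pmod{p^k}$. It is settled by noting that the solutions are exactly the multiples of $p^{k-\min(k,t)}$, and there are $p^{\min(k,t)}$ of these.
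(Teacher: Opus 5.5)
Your proposal is correct and follows essentially the same route as the paper: it factors $\alpha_t(G_\lambda)$ over the cyclic factors and shows $\alpha_t({\rm C}_{p^k})=p^{\min(k,t)}$ by counting solutions of $p^k\mid ap^t$. Your separate check of the $t=0$ convention for $s_0(\lambda)$ is a small extra that the paper leaves implicit.
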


\begin{proof}
Let $\lambda=(k_{1},k_{2},\ldots,k_{r})$. 
By the direct product structure,
\[
\alpha_t(G_\lambda)=\prod_{i=1}^r \alpha_t({\rm C}_{p^{k_i}}).
\]
Thus it suffices to consider the cyclic group ${\rm C}_{p^k}=\langle g\rangle$. 
For $g^a\in {\rm C}_{p^k}$, we have
\[
(g^a)^{p^t}=e
\iff g^{ap^t}=e
\iff p^k\mid ap^t.
\]
Hence
\[
\alpha_t({\rm C}_{p^k})
=
\begin{cases}
p^t & (t<k),\\
p^k & (t\ge k)
\end{cases}
=
p^{\min(k,t)}.
\]
Therefore,
\[
\alpha_t(G_\lambda)
=
\prod_{i=1}^r p^{\min(k_i,t)}
=
p^{\sum_{i=1}^r \min(k_i,t)}
=
p^{s_t(\lambda)}.
\]
\end{proof}

\begin{lem}\label{lem:6}
For every $t=0,1,\ldots,N$, we have
\[
\varphi_{G_\lambda}(p^t)=\alpha_t(G_\lambda)-\alpha_{t-1}(G_\lambda).
\]
\end{lem}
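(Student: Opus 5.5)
Since $G_\lambda$ is a $p$-group of order $p^N$, every element has order $p^t$ for a unique $t\in\{0,1,\ldots,N\}$. The plan is to partition the set $\{g\in G_\lambda \mid g^{p^t}=e\}$ according to element orders and take a difference of counts.

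First, I will note that $g^{p^t}=e$ holds if and only if $\ord(g)\mid p^t$. Because $\ord(g)$ is itself a power of $p$, this is equivalent to $\ord(g)=p^u$ for some $u\le t$. This gives the disjoint decomposition
\[
\{g\in G_\lambda \mid g^{p^t}=e\}=\bigsqcup_{u=0}^{t}\{g\in G_\lambda\mid \ord(g)=p^u\},
\]
and therefore
\[
\alpha_t(G_\lambda)=\sum_{u=0}^{t}\varphi_{G_\lambda}(p^u)
\]
for every $t\ge 0$.

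Second, for $t\ge1$ I will subtract the identity for $t-1$ from the identity for $t$, which yields $\varphi_{G_\lambda}(p^t)=\alpha_t(G_\lambda)-\alpha_{t-1}(G_\lambda)$. For $t=0$, the only element of order $1$ is $e$, so $\varphi_{G_\lambda}(1)=1=\alpha_0(G_\lambda)-0$, in agreement with the convention $\alpha_{-1}(G_\lambda)=0$.

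There is no substantial obstacle here. The only point needing care is the $t=0$ case, which the convention $\alpha_{-1}:=0$ is designed to handle. This lemma, together with Lemma~\ref{lem:5}, gives $\varphi_{G_\lambda}(p^t)=p^{s_t(\lambda)}-p^{s_{t-1}(\lambda)}$, which is what will later be substituted into \eqref{eq:3}.
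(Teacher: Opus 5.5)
Your proof is correct and essentially the paper's argument. The paper writes the set of elements of order $p^t$ directly as the set difference $\{g \mid g^{p^t}=e\}\setminus\{g \mid g^{p^{t-1}}=e\}$ and checks $t=0$ separately, which is exactly what your identity $\alpha_t(G_\lambda)=\sum_{u=0}^{t}\varphi_{G_\lambda}(p^u)$ gives after subtracting consecutive cases.
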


\begin{proof}
For $t = 0$, we have
\[
\varphi_{G_\lambda}(1)=1
\]
and
\[
\alpha_0(G_\lambda)-\alpha_{-1}(G_\lambda)=1-0=1.
\]
Thus the assertion holds for $t=0$. 
Let $1\leq t\leq N$. 
Since
\[
\{g\in G_\lambda\mid \ord(g)=p^t\}
=
\{g\in G_\lambda\mid g^{p^t}=e\}\setminus
\{g\in G_\lambda\mid g^{p^{t-1}}=e\},
\]
we have
\[
\varphi_{G_\lambda}(p^t)
=
\alpha_t(G_\lambda)-\alpha_{t-1}(G_\lambda).
\]
This completes the proof.
\end{proof}

\begin{proof}[Proof of Theorem~\ref{thm:1}]
Let $n=|G_\lambda|=p^N$. 
By equation~\eqref{eq:3},
\[
\Nu(\Per(G_{\lambda}))
=
\frac{1}{2n}\sum_{d\mid n}\varphi_{G_\lambda}(d)\binom{\frac{2n}{d}}{\frac{n}{d}}
=
\frac{1}{2p^N}\sum_{t=0}^{N}\varphi_{G_\lambda}(p^t)\binom{2p^{N-t}}{p^{N-t}}.
\]
By Lemma~\ref{lem:6}, we have $\varphi_{G_\lambda}(p^t)=\alpha_t(G_\lambda)-\alpha_{t-1}(G_\lambda)$, so by Lemma~\ref{lem:5},
\begin{align*}
\Nu(\Per(G_{\lambda}))
&=
\frac{1}{2p^N}\sum_{t=0}^{N}\bigl(\alpha_t(G_\lambda)-\alpha_{t-1}(G_\lambda)\bigr)\beta_t \\
&=
\frac{1}{2p^N}
\left(
\alpha_N(G_\lambda)\beta_N
+
\sum_{t=0}^{N-1}\alpha_t(G_\lambda)(\beta_t-\beta_{t+1})
\right) \\ 
&=
1+\frac{1}{2p^{N}}\sum_{t=0}^{N-1}p^{s_t(\lambda)}(\beta_t-\beta_{t+1}).
\end{align*}
This completes the proof.
\end{proof}

\section{Order comparison via Young diagrams}
\label{sec:order}

In this section, we prove Theorem~\ref{thm:4}. 
The explicit formula obtained in Theorem~\ref{thm:1} shows that the comparison of $\Nu(\Per(G_{\lambda}))$ and $\Nu(\Per(G_{\mu}))$ is controlled by the partial column sums of the Young diagrams of $\lambda$ and $\mu$. 
We begin with two estimates that will be used to control the terms appearing in this formula.

\begin{lem}\label{lem:7}
For $t=0,1,\ldots,N-1$, we have
\[
\beta_t-\beta_{t+1}>0.
\]
\end{lem}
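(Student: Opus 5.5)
We need $\beta_t > \beta_{t+1}$, where $\beta_t=\binom{2p^{N-t}}{p^{N-t}}$. Setting $m=p^{N-t-1}$, we have $p^{N-t}=pm$, so the claim is equivalent to
\[
\binom{2pm}{pm}>\binom{2m}{m}, \qquad m\ge 1,\ p\ge 2.
\]
So the plan is to show that the central binomial coefficient $c_k:=\binom{2k}{k}$ is strictly increasing in $k\ge 1$ (indeed in $k\ge 0$). Since $pm>m$, the lemma then follows at once.

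For the monotonicity I will use the ratio of consecutive terms:
\[
\frac{c_{k+1}}{c_k}=\frac{(2k+2)(2k+1)}{(k+1)^2}=\frac{2(2k+1)}{k+1}.
\]
This ratio exceeds $1$ exactly when $4k+2>k+1$, which holds for all $k\ge 0$. Hence $c_0<c_1<c_2<\cdots$.

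Applying this with $k$ running from $m$ up to $pm$, and noting that $pm\ge 2m>m$ because $p\ge 2$ and $m\ge 1$, gives $\beta_t=c_{pm}>c_m=\beta_{t+1}$. This holds for each $t=0,1,\ldots,N-1$.

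There is no real obstacle in this argument. The only point needing care is the index bookkeeping: $\beta_{t+1}$ corresponds to the smaller argument $p^{N-t-1}$, and in the boundary case $t=N-1$ we get $\beta_N=\binom{2}{1}=2<\binom{2p}{p}$, which is also covered by the monotonicity.
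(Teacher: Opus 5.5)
Your proof is correct and takes the same approach as the paper: both reduce the claim to strict monotonicity of $\binom{2k}{k}$ via the ratio $\frac{2(2k+1)}{k+1}>1$. Your explicit bookkeeping ($m=p^{N-t-1}$, so $\beta_t=c_{pm}$ and $\beta_{t+1}=c_m$ with $pm>m$) spells out a step the paper leaves implicit.
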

\begin{proof}
It suffices to show that the central binomial coefficient $\binom{2m}{m}$ is strictly increasing for $m\ge 1$. 
Indeed,
\[
\frac{\binom{2(m+1)}{m+1}}{\binom{2m}{m}}
=
\frac{(2m+2)(2m+1)}{(m+1)^2}
=
\frac{2(2m+1)}{m+1}>1.
\]
Hence $\binom{2m}{m}$ is strictly increasing in $m$. 
\end{proof}

\begin{lem}[{\cite[Lemma~5~(ii), special case]{MR4826770}}]\label{lem:8}
Let $n$ be a positive integer, and let $a,b$ be divisors of $n$ such that $b \geq 2a$. 
Then
\[
a \binom{\frac{2n}{a}}{\frac{n}{a}} > n \binom{\frac{2n}{b}}{\frac{n}{b}}.
\]
In particular, if we put $n = p^{N}$, $a = p^{m}$, and $b = p^{m+1}$, then
\[
p^{m} \beta_{m} > n \beta_{m + 1}.
\]
\end{lem}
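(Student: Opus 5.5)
We write $x := n/a$ and $y := n/b$. Since $a$ and $b$ divide $n$, both are positive integers. The hypothesis $b\ge 2a$ becomes $x\ge 2y$; in particular $y\ge 1$ and $x\ge 2$. Dividing the desired inequality by $a$ and using $n/a = x$, it reads
\[
\binom{2x}{x} > x\binom{2y}{y}.
\]
The plan is to compare the two central binomial coefficients through the successive ratios already computed in the proof of Lemma~\ref{lem:7}.

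For $m\ge 1$ we have
\[
\frac{\binom{2(m+1)}{m+1}}{\binom{2m}{m}}=\frac{2(2m+1)}{m+1}\ge 3,
\]
since $4m+2\ge 3m+3$ holds exactly when $m\ge 1$. Since $y\ge 1$, we can chain these ratios from $m=y$ up to $m=x-1$. This gives
\[
\binom{2x}{x}\ge 3^{\,x-y}\binom{2y}{y}.
\]
Because $y\le x/2$, we have $x-y\ge x/2$. It therefore suffices to check that $3^{x/2}=(\sqrt3)^x>x$ for every integer $x\ge 1$. This is elementary: it holds at $x=1$, and for $x\ge 1$ the ratio $\sqrt3\,(x)/(x+1)$ stays above $(x)/(x+1)\cdot\sqrt{3}$, while $\sqrt3\ge (x+1)/x$ for all $x\ge 2$, so the inequality propagates by induction. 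Alternatively, compare $3^{x}$ with $x^2$ directly. Combining these yields the strict inequality $\binom{2x}{x}>x\binom{2y}{y}$. Multiplying back by $a$ gives the general statement.

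For the special case, put $n=p^N$, $a=p^m$ and $b=p^{m+1}$. Then $b=pa\ge 2a$, and $n/a=p^{N-m}$, $n/b=p^{N-m-1}$. The left side is $p^m\beta_m$ and the right side is $n\beta_{m+1}$, which is the claimed inequality. The only mild subtlety is making sure the ratio bound $\ge 3$ is applied only for $m\ge 1$; this is guaranteed by $y\ge1$. I do not expect any step to be a genuine obstacle.
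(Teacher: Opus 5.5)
Your proof is correct. The paper does not prove this lemma at all; it quotes it as a special case of Lemma~5(ii) of Li--Zhang, where it appears in a more general form.

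Your argument is therefore a self-contained replacement for that citation. The substitution $x=n/a$, $y=n/b$ gives $x\ge 2y\ge 2$, and the claim reduces to $\binom{2x}{x}>x\binom{2y}{y}$. You then sharpen the ratio computed in Lemma~\ref{lem:7} to $\frac{2(2m+1)}{m+1}\ge 3$ for $m\ge 1$. Telescoping gives $\binom{2x}{x}\ge 3^{x-y}\binom{2y}{y}\ge 3^{x/2}\binom{2y}{y}$, and you finish with $3^{x/2}>x$. The sharpening to $3$ matters in this chain: the cruder bound $\ge 2$ would only give $2^{x/2}$, which is not larger than $x$ for $2\le x\le 4$.

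What your version buys is that Theorem~\ref{thm:4} becomes fully self-contained, resting only on the computation already done in Lemma~\ref{lem:7}. The citation buys brevity and the more general statement available in the source.

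Two small points to tidy up.
\begin{itemize}
\item Your induction for $(\sqrt3)^x>x$ is garbled: the sentence comparing $\sqrt3\,x/(x+1)$ with $x/(x+1)\cdot\sqrt3$ compares a quantity with itself. Moreover, the step $\sqrt3\,x\ge x+1$ holds only for $x\ge 2$, so the induction must start from the base case $x=2$ (where $3>2$). That is all you need, since $x\ge 2$. Equivalently, use $3^{x+1}>3x^2\ge (x+1)^2$ for $x\ge 2$.
\item In the special case, you should note that $m\le N-1$ is needed for $b=p^{m+1}$ to divide $n$. This is exactly the range in which the lemma is applied in the proof of Theorem~\ref{thm:4}.
\end{itemize}
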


\begin{proof}[Proof of Theorem~\ref{thm:4}]
Let $n=|G_\lambda|=|G_\mu|=p^N$. 
Then, by \eqref{eq:3},
\[
2p^N\bigl(\Nu(\Per(G_{\mu}))-\Nu(\Per(G_{\lambda}))\bigr)
=
\sum_{t=0}^{N} \gamma_t\, \beta_t, 
\]
where 
\[
\gamma_t:=\varphi_{G_\mu}(p^t)-\varphi_{G_\lambda}(p^t), 
\qquad t=0,1,\ldots,N. 
\]
We prove that 
\[
\sum_{t=0}^{N} \gamma_t\, \beta_t > 0
\]
under the assumptions 
\[
s_t(\lambda)=s_t(\mu)\quad (t=1,2,\ldots,m-1),
\qquad
s_m(\lambda)<s_m(\mu). 
\]
By the first assumption and Lemma~\ref{lem:5}, we have
\[
\alpha_t(G_\lambda)=\alpha_t(G_\mu)\qquad (t=0,1,\ldots,m-1).
\]
Therefore, by Lemma~\ref{lem:6}, we have $\gamma_t=0$ for $t<m$. 
Moreover,
\begin{align*}
\gamma_m
&=
\bigl(\alpha_m(G_\mu)-\alpha_{m-1}(G_\mu)\bigr)
-
\bigl(\alpha_m(G_\lambda)-\alpha_{m-1}(G_\lambda)\bigr) \\
&=
\alpha_m(G_\mu)-\alpha_m(G_\lambda) \\ 
&=
p^{s_m(\mu)}-p^{s_m(\lambda)}.
\end{align*}
Hence
\[
\gamma_m
=
p^{s_m(\lambda)}\bigl(p^{s_m(\mu)-s_m(\lambda)}-1\bigr)
\ge p^{s_m(\lambda)}
\ge p^m.
\]
Now define
\[
I^{-} := \{ t \in \{ m + 1, m + 2, \ldots, N \} \mid \gamma_{t} < 0 \}.
\]
Then
\[
\sum_{t=0}^{N} \gamma_t\, \beta_t
=
\gamma_m\, \beta_m
+
\sum_{\substack{t = m + 1 \\ \gamma_{t} \geq 0}}^{N} \gamma_{t} \beta_{t}
+
\sum_{t\in I^{-}} \gamma_{t} \beta_{t}.
\]
Since the second term is nonnegative, we obtain
\[
\sum_{t=0}^{N} \gamma_{t} \beta_{t}
\ge
\gamma_{m} \beta_{m} + \sum_{t \in I^{-}} \gamma_{t} \beta_{t}.
\]
Furthermore, by Lemma~\ref{lem:7},
\[
\beta_{m+1}>\beta_{m+2}>\cdots>\beta_N>0.
\]
Thus for any $t \in I^-$, it follows that
\[
\gamma_t\, \beta_t \ge \gamma_t\, \beta_{m+1}.
\]
Therefore,
\[
\sum_{t = 0}^{N} \gamma_{t} \beta_{t}
\ge
\gamma_{m} \beta_{m} + \beta_{m+1}\sum_{t \in I^{-}} \gamma_{t}
=
\gamma_{m} \beta_{m} - \beta_{m+1}\sum_{t \in I^{-}}(- \gamma_{t}).
\]
Here, 
we have
\[
-\gamma_t
=
\varphi_{G_\lambda}(p^t) - \varphi_{G_\mu}(p^t)
\le
\varphi_{G_\lambda}(p^t)
\]
for any $t = 0, 1, \ldots, N$. 
Hence
\[
\sum_{t\in I^-}(-\gamma_t)
\le
\sum_{t=0}^{N}\varphi_{G_\lambda}(p^t)
=
|G_\lambda|
=
n.
\]
Therefore, by Lemma~\ref{lem:8},
\[
\sum_{t=0}^{N} \gamma_t\, \beta_t
\ge
\gamma_{m} \beta_{m} - n \beta_{m+1}
\ge
p^m \beta_{m} - n \beta_{m+1} 
> 0.
\]
Thus
\[
2p^N \left(\Nu(\Per(G_{\mu})) - \Nu(\Per(G_{\lambda})) \right) > 0,
\]
and hence
\[
\Nu(\Per(G_{\lambda})) < \Nu(\Per(G_{\mu})).
\]
This completes the proof.
\end{proof}

\section{Congruence properties and Wolstenholme primes}
\label{sec:wolstenholme}

In this section, we study congruence properties of $\Nu(\Per(G_{\lambda}))$ for abelian $p$-groups. 
We first treat noncyclic abelian $p$-groups and then record the corresponding statement for cyclic $p$-groups. 

\begin{lem}[{\cite[Eq.~(39)]{mestrovic2011wolstenholmes}}]\label{lem:12}
Let $p \geq 5$ be a prime. Then
\[
\binom{2 p^{m}}{p^{m}} \equiv \binom{2 p^{m-1}}{p^{m-1}} \pmod{p^{3m}}.
\]
\end{lem}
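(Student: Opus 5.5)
This congruence is classical (it is cited from Me\v{s}trovi\'c), but here is how I would prove it. The plan is to reduce it to the Jacobsthal--Kazandzidis congruence
\[
\binom{ap}{bp}\equiv\binom{a}{b}\pmod{p^{3}}\qquad(p\ge 5),
\]
which holds modulo $p^{3+v_p(ab(a-b)\binom{a}{b})}$. Setting $a=2p^{m-1}$ and $b=p^{m-1}$ gives $ap=2p^m$ and $bp=p^m$, which is exactly the pair of binomials in the statement. The factor $ab(a-b)=2p^{3(m-1)}$ has $p$-adic valuation $3(m-1)$, so the modulus becomes at least $p^{3+3(m-1)}=p^{3m}$, as required. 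So the whole proof amounts to establishing this refined Jacobsthal congruence, or at least the special case we need.

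If I wanted a self-contained argument for this special case, I would proceed as follows.

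\textbf{Step 1: factorization.} Write
\[
\binom{2p^m}{p^m}=\prod_{j=1}^{p^m}\frac{p^m+j}{j}.
\]
Split the factors into those with $p\mid j$ and those with $p\nmid j$. The factors with $p\mid j$, say $j=pi$, contribute exactly
\[
\prod_{i=1}^{p^{m-1}}\frac{p^{m-1}+i}{i}=\binom{2p^{m-1}}{p^{m-1}}.
\]
It therefore remains to show
\[
P:=\prod_{\substack{1\le j\le p^m\\ p\nmid j}}\left(1+\frac{p^m}{j}\right)\equiv 1\pmod{p^{3m}}.
\]

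\textbf{Step 2: expansion.} Expand $P=1+p^m e_1+p^{2m}e_2+p^{3m}e_3+\cdots$, where $e_k$ is the $k$-th elementary symmetric function of the inverses $1/j$ over the reduced residues $j$ modulo $p^m$. These $e_k$ are $p$-integral. It then suffices to show
\[
e_1\equiv 0\pmod{p^{2m}},\qquad e_2\equiv 0\pmod{p^{m}}.
\]

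\textbf{Step 3: the power sums.} Use the power sums $S_k=\sum j^{-k}$. Pairing $j$ with $p^m-j$ gives $2S_1\equiv -p^mS_2\pmod{p^{2m}}$. Since $j\mapsto j^{-1}$ permutes the unit group, $S_2$ equals $\sum j^{2}$ modulo $p^m$. By the standard formula for sums of squares over units (for $p\ge5$, $6$ is invertible), $\sum j^2\equiv 0\pmod{p^m}$. This yields $S_1\equiv 0\pmod{p^{2m}}$ and $S_2\equiv 0\pmod{p^m}$, and hence, via Newton's identity $e_2=(S_1^2-S_2)/2$, also $e_2\equiv 0\pmod{p^m}$.

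\textbf{Main obstacle.} The delicate point is Step 3's refinement. The pairing argument must be carried out carefully: one needs the cubic term $p^{2m}S_3$ and the valuation bookkeeping to combine so that $S_1$ truly vanishes modulo $p^{2m}$, not merely modulo $p^{2m-1}$. For this I would use the exact identity
\[
\frac{1}{j}+\frac{1}{p^m-j}=\frac{p^m}{j(p^m-j)},
\]
sum it over all units, and reduce the resulting sum $\sum 1/\bigl(j(p^m-j)\bigr)\equiv -S_2\pmod{p^m}$ using $p\ge 5$.
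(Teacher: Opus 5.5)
Your proposal is correct. The paper gives no proof of this lemma; it only cites Me\v{s}trovi\'c's survey. The standard derivation of that congruence is exactly your first paragraph: apply Jacobsthal's congruence with $a=2p^{m-1}$ and $b=p^{m-1}$. Then $v_p(ab(a-b))=3(m-1)$ and $v_p\binom{a}{b}=0$, which gives the modulus $p^{3m}$.

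Your Steps 1--3 are a genuinely different, self-contained route, and they check out. The factorization $\binom{2p^m}{p^m}=\binom{2p^{m-1}}{p^{m-1}}\prod_{p\nmid j}(1+p^m/j)$ is exact. It reduces the claim to $S_1\equiv 0 \pmod{p^{2m}}$ (Leudesdorf's congruence for $n=p^m$) together with $S_2\equiv 0 \pmod{p^m}$, and you establish both correctly.

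The point you flag as the main obstacle is not actually delicate. The exact identity gives $2S_1=p^m\sum_j \frac{1}{j(p^m-j)}$. That sum is $\equiv -S_2\equiv 0 \pmod{p^m}$ simply because $p^m-j\equiv -j \pmod{p^m}$. So no $S_3$ term is involved; it would only matter modulo $p^{3m}$.

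The hypothesis $p\geq 5$ is used only for the invertibility of $6$ in the sum of squares over units. Oddness of $p$ already makes the pairing fixed-point-free and lets you divide by $2$. For $p=3$ that sum is not $0$ modulo $3^m$, consistent with $\binom{6}{3}=20\not\equiv 2\pmod{27}$.

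The citation buys brevity and the full Jacobsthal statement. Your elementary argument is essentially the standard elementary reasoning behind such congruences, specialized to this case. It makes transparent where the exponent $3m$ and the hypothesis $p\geq 5$ come from.
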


Applying Lemma~\ref{lem:12} with $m=N-t$, we obtain the following corollary. 

\begin{cor}\label{cor:vp}
Let $p \geq 5$ be a prime, and let $v_p$ denote the $p$-adic valuation. 
Then
\[
v_{p}(\beta_{t} - \beta_{t+1}) \geq 3 (N-t)
\]
for every $t=0,1,\ldots,N-1$.
\end{cor}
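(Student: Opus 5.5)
This is a direct reading of Lemma~\ref{lem:12} in the notation $\beta_t=\binom{2p^{N-t}}{p^{N-t}}$. Fix $t\in\{0,1,\ldots,N-1\}$ and put $m:=N-t$. Since $0\le t\le N-1$, we have $m\ge 1$, so Lemma~\ref{lem:12} applies to this $m$.

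Unwinding the definitions gives
\[
\beta_t=\binom{2p^{m}}{p^{m}},\qquad \beta_{t+1}=\binom{2p^{N-t-1}}{p^{N-t-1}}=\binom{2p^{m-1}}{p^{m-1}}.
\]
Lemma~\ref{lem:12} then says $\beta_t\equiv\beta_{t+1}\pmod{p^{3m}}$, that is, $p^{3(N-t)}$ divides $\beta_t-\beta_{t+1}$.

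By Lemma~\ref{lem:7}, the difference $\beta_t-\beta_{t+1}$ is a positive integer, hence nonzero. Its $p$-adic valuation is therefore finite, and divisibility by $p^{3(N-t)}$ translates exactly into $v_p(\beta_t-\beta_{t+1})\ge 3(N-t)$. (If the difference were zero, the inequality would hold trivially with the convention $v_p(0)=\infty$.)

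There is no real obstacle here. The only points that need care are the index shift, which requires checking that $m=N-t$ ranges over $1,\ldots,N$ so the hypothesis $m\ge 1$ of Lemma~\ref{lem:12} is met, and confirming that $\beta_{t+1}$ matches the $p^{m-1}$ term of that lemma.
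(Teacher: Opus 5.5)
Your proposal is correct and is the paper's argument: set $m=N-t\ge 1$ in Lemma~\ref{lem:12}, match $\beta_t$ and $\beta_{t+1}$ to the two central binomial coefficients there, and read off the divisibility by $p^{3(N-t)}$. The appeal to Lemma~\ref{lem:7} for nonvanishing is harmless but unnecessary, since the inequality holds trivially when the difference is $0$ (as you already note).
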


\begin{proof}[Proof of Theorem~\ref{thm:5}]
By Theorem~\ref{thm:1},
\[
\Nu(\Per(G_{\lambda}))-1
=
\frac{1}{2p^N}\sum_{t=0}^{N-1} p^{s_t(\lambda)}(\beta_t-\beta_{t+1}).
\]
We estimate the $p$-adic valuation of each summand. 
First, consider the case $t=0$.
Since $G_\lambda$ is noncyclic, we have $N\ge 2$. 
Hence by Corollary~\ref{cor:vp}, 
\[
v_p\!\left(p^{s_0(\lambda)}(\beta_0-\beta_1)\right)
=
v_p(\beta_0-\beta_1)
\ge 3N \ge N+4.
\]
Therefore,
\[
v_p\!\left(\frac{p^{s_0(\lambda)}(\beta_0-\beta_1)}{2p^N}\right)\ge 4.
\]
Next, let $1\le t\le N-1$.
Since the partition $\lambda$ has at least two parts, 
we have 
\[
s_t(\lambda)\ge t+1.
\]
Hence by Corollary~\ref{cor:vp}, 
\[
v_p\!\left(p^{s_t(\lambda)}(\beta_t-\beta_{t+1})\right)
\ge s_t(\lambda)+3(N-t)
\ge 3N-2t+1 
= N+1+2(N-t)
\ge N+3.
\]
Therefore,
\[
v_p\!\left(\frac{p^{s_t(\lambda)}(\beta_t-\beta_{t+1})}{2p^N}\right)\ge 3.
\]
Thus each summand in
\[
\frac{1}{2p^N}\sum_{t=0}^{N-1} p^{s_t(\lambda)}(\beta_t-\beta_{t+1})
\]
is divisible by $p^3$, and hence so is the whole sum. Therefore
\[
\Nu(\Per(G_{\lambda}))\equiv 1 \pmod{p^3}.
\]
This completes the proof. 
\end{proof}

\begin{proof}[Proof of Theorem~\ref{thm:6}]
Again by Theorem~\ref{thm:1},
\[
\Nu(\Per(G_{\lambda}))-1
=
\frac{1}{2p^N}\sum_{t=0}^{N-1} p^{s_t(\lambda)}(\beta_t-\beta_{t+1}).
\]
As shown in the proof of Theorem~\ref{thm:5}, 
\[
v_{p}(p^{s_{0}(\lambda)} (\beta_{0} - \beta_{1})) \geq N + 4, 
\]
and, for $1 \leq t \leq N-2$, 
\[
v_{p}(p^{s_{t}(\lambda)} (\beta_{t} - \beta_{t+1})) 
\geq N+1+2(N-t)
\geq N+5.
\]
Since the partition $\lambda$ has at least two parts, we also have
\[
s_{N-1}(\lambda)=N.
\]
Therefore 
\[
\Nu(\Per(G_{\lambda})) - 1 \equiv \frac{1}{2} (\beta_{N-1} - \beta_{N}) \pmod{p^{4}}.
\]
Since
\[
\frac{1}{2}(\beta_{N-1} - \beta_N) 
=\frac{1}{2}\left(\binom{2p}{p}-\binom{2}{1}\right)=\binom{2p-1}{p-1}-1,
\]
we have
\[
\Nu(\Per(G_{\lambda})) \equiv \binom{2p-1}{p-1} \pmod{p^{4}}.
\]
This completes the proof. 
\end{proof}

\begin{proof}[Proof of Theorem~\ref{thm:7}]
In the cyclic case, the partition corresponding to ${\rm C}_{p^{N}}$ is $\lambda=(N)$, and hence $s_t(\lambda)=t$ for $0\leq t\leq N-1$. 
Therefore Theorem~\ref{thm:1} gives
\[
\Nu(\Per({\rm C}_{p^{N}}))-1
=
\frac{1}{2p^{N}}
\sum_{t=0}^{N-1}p^{t}(\beta_t-\beta_{t+1}). 
\]
For $0\leq t\leq N-2$, Corollary~\ref{cor:vp} gives
\[
v_p(\beta_t-\beta_{t+1})\geq 3(N-t).
\]
Hence
\[
v_p \left(
\frac{p^t(\beta_t-\beta_{t+1})}{2p^N}
\right)
\geq
t+3(N-t)-N
=
2(N-t)
\geq 4.
\]
Thus, after division by $2p^N$, all terms with $0\leq t\leq N-2$ are congruent to $0$ modulo $p^4$. 
Consequently,
\[
\Nu(\Per({\rm C}_{p^{N}}))-1
\equiv
\frac{1}{2p}(\beta_{N-1}-\beta_N)
\pmod{p^{4}}.
\]
Since
\[
\frac{1}{2}(\beta_{N-1} - \beta_N) 
=
\frac{1}{2}\left(\binom{2p}{p}-\binom{2}{1}\right)
=
\binom{2p-1}{p-1}-1,
\]
we obtain
\[
\Nu(\Per({\rm C}_{p^{N}}))-1
\equiv
\frac{1}{p}
\left\{
\binom{2p-1}{p-1}-1
\right\}
\pmod{p^{4}}.
\]
This completes the proof.
\end{proof}

\clearpage

\section*{Acknowledgments}
The first author was supported by the Strategic Priority Budget of the University of Miyazaki for the 2025 fiscal year. 

\bibliography{reference}
\bibliographystyle{plain}

\medskip
\begin{flushleft}
Naoya Yamaguchi \\
Faculty of Education, University of Miyazaki \\
1-1 Gakuen Kibanadai-nishi \\
Miyazaki 889-2192, Japan \\
{\it Email address}: n-yamaguchi@miyazaki-u.ac.jp
\end{flushleft}
\medskip

\begin{flushleft}
Yuka Yamaguchi \\
Faculty of Education, University of Miyazaki \\
1-1 Gakuen Kibanadai-nishi \\
Miyazaki 889-2192, Japan \\
{\it Email address}: y-yamaguchi@miyazaki-u.ac.jp
\end{flushleft}
\medskip

\end{document}